\theoremstyle{plain}
\newtheorem{definition}{Definition}
\newtheorem{example}{Example}
\newtheorem{remark}{Remark}
\newtheorem{theorem}{Theorem}
\numberwithin{equation}{section}
\begin{document}
\title{Measures of localization and quantitative Nyquist densities}
\author{Lu\'{\i}s Daniel Abreu}
\address{Austrian Academy of Sciences, Acoustics Research Institute,
Wohllebengasse 12-14, Vienna A-1040, Austria. }
\email{labreu@kfs.oeaw.ac.at}
\author{Jo\~{a}o M. Pereira}
\address{Program in Applied and Computational Mathematics, Princeton
University, NJ 08544, USA}
\email{jpereira@princeton.edu}
\subjclass{}
\keywords{}
\thanks{}
\thanks{L.D.~Abreu was supported by COMPETE/FEDER via CMUC/FCT project
PTDC/MAT/114394/2009  and by Austrian Science Foundation (FWF) START-project
FLAME ("Frames and Linear Operators for Acoustical Modeling and Parameter
Estimation", Y 551-N13)\\
Jo\~{a}o M. Pereira was supported by a Caloust Gulbenkian Foundation grant
\textquotedblleft Novos talentos em Matem\'{a}tica\textquotedblright .}

\begin{abstract}
We obtain a refinement of the degrees of freedom estimate of Landau and
Pollak. More precisely, we estimate, in terms of $\epsilon $, the increase
in the degrees of freedom resulting upon allowing the functions to contain a
certain prescribed amount of energy $\epsilon $ outside a region delimited
by a set $T$ in time and a set $\Omega $ in frequency. In this situation,
the lower asymptotic Nyquist density $\left\vert T\right\vert \left\vert
\Omega \right\vert /2\pi $ is increased to $(1+\epsilon )\left\vert
T\right\vert \left\vert \Omega \right\vert /2\pi $. At the technical level,
we prove a \emph{pseudospectra} version of the classical spectral dimension
result of Landau and Pollak, in the multivariate setting of Landau.
Analogous results are obtained for Gabor localization operators in a compact
region of the time-frequency plane.
\end{abstract}

\maketitle
\dedicatory{\emph{\textquotedblleft It is easy to argue that real signals
must be bandlimited. It is also easy to argue that they cannot be
so\textquotedblright , }David Slepian, \emph{On Bandwith}, 1976.}

\section{Introduction}

\subsection{The Nyquist rate and Landau-Pollack degrees of freedom estimate}

Let $D_{T}$\ and $B_{\Omega }$\ denote the operators which cut the time
content outside $T$ and the\ frequency content outside $\Omega $,
respectively.\ In the fundamental paper \cite{LP}, whose purpose was to 
\emph{examine the true in the engineering intuition that there are
approximately }$\left\vert T\right\vert \left\vert \Omega \right\vert /2\pi $%
\emph{\ independent signals of bandwidth }$\Omega $\emph{\ concentrated on
an interval of length }$T$, Landau and Pollak have considered the eigenvalue
problem associated with the positive self-adjoint operator%
\begin{equation}
P_{T,\Omega }=D_{T}B_{\Omega }D_{T}  \label{eigen}
\end{equation}%
When $T$ and $\Omega $ are real intervals, the operator\ involved in this
problem can be written explicitly as%
\begin{equation*}
(P_{T,\Omega }f)(x)=\left\{ 
\begin{array}{c}
\int_{T}\frac{\sin \Omega (x-t)}{\pi (x-t)}f(t)dt\text{ \ \ \ }if\text{ \ }%
x\in T \\ 
0\text{ \ \ \ \ \ \ \ \ \ \ \ \ \ \ \ \ \ \ \ \ \ \ \ }if\text{ \ }x\notin T%
\end{array}%
\right. .
\end{equation*}%
The cornerstone of the results in \cite{LP} is the following asymptotic
estimate for the number of eigenvalues $\lambda _{n}$ of (\ref{eigen}) which
are close to one: 
\begin{equation}
\#\{n:\lambda _{n}>1-\delta \}\simeq \left\vert T\right\vert \left\vert
\Omega \right\vert /2\pi +C_{\delta }\log \left( \left\vert T\right\vert
\left\vert \Omega \right\vert \right) \text{,}  \label{estimate}
\end{equation}%
as $T\rightarrow \infty $, where $C_{\delta }$ is a constant depending only
on $\delta $. Since the eigenvalues of the operator (\ref{eigen}) are the
same as those of the operator $B_{\Omega }D_{T}$, whose eigenfunctions $f$
satisfy 
\begin{equation*}
\int_{_{T}}\left\vert f\right\vert ^{2}=\lambda \left\Vert f\right\Vert ^{2}%
\text{,}
\end{equation*}%
the estimate (\ref{estimate})\ provides us with the number of orthogonal
eigenfunctions $f$ of (\ref{eigen}), such that%
\begin{equation*}
\int_{_{T}}\left\vert f\right\vert ^{2}\sim \left\Vert f\right\Vert ^{2}%
\text{,}
\end{equation*}%
asymptotically when $T\rightarrow \infty $.\ Within mathematical signal
analysis (see, for instance the discussion in \cite[pg. 23]{Dau} and the
recent book \cite{HogLak}), (\ref{estimate}) is viewed as a mathematical
formulation of the Nyquist rate, the fact that a time- and bandlimited
region $T\times \Omega $ corresponds to $\left\vert T\right\vert \left\vert
\Omega \right\vert /2\pi $ degrees of freedom. In other words, there exist,
up to a small error, $\left\vert T\right\vert \left\vert \Omega \right\vert
/2\pi $ independent functions that are essentially timelimited to $T$ and
bandlimited to $\Omega $.

The main goal of this note is to refine the degrees of freedom estimate (\ref%
{estimate}) in the context to be made precise in the next subsection.

\subsection{A refinement of Landau-Pollack degrees of freedom estimate}

Ideally, one would like to count the number of orthogonal functions in $%
L^{2}(\mathbb{R})$, which are time and band-limited to a bounded region like 
$T\times \Omega $. Unfortunately, such functions do not exist (because
band-limited functions are analytic). As a result, it is natural to count
the number of orthonormal functions in $L^{2}(\mathbb{R})$, which are
approximately time and band-limited to a bounded region like $T\times \Omega 
$. An optimal solution to this problem is given by the number of
eigenfunctions of (\ref{eigen}) whose eigenvalues are very close to one in
the sense that they exceed a threshold $1-\delta $, leading to (\ref%
{estimate}). We remark that estimate (\ref{estimate}) counts the degrees of
freedom in spaces generated by the so-called prolate spheroidal wave
functions (see \cite{prolates} for a recent reference on these functions).
Our count of degrees of freedom will be based on different functions (but
the new functions will be constructed using the prolate spheroidal
functions).

Our purpose is to refine (\ref{estimate}), by taking advantage of the fact
that most of the eigenvalues of $P_{T,\Omega }$ are closer to $1$ than to $%
1-\delta $. To get an estimate of the space of functions satisfying simply $%
\left\Vert P_{T,\Omega }f-f\right\Vert \geq $ $1-\delta $, we can replace $n$
orthogonal eigenfunctions (which, in the case of the interval described in
this introduction are the prolate spheroidal wave functions \cite{prolates}\
)\ of $P_{T,\Omega }$ whose eigenfunctions are close to $1$, with $n+1$
orthogonal functions with $\left\Vert P_{T,\Omega }f-f\right\Vert \approx $ $%
1-\delta $. Essentially, we split the well concentrated energy of the $n$
prolate functions among $n+1$ vectors and add an extra dimension to obtain
an orthogonal set. This idea will allow one to build a set of orthonormal
functions in $L^{2}(\mathbb{R})$, which is a bit less concentrated than the
prolates, so that it contains a prescribed \emph{quantity }$\epsilon $\emph{%
\ of time-frequency content outside the bounded region} $T\times \Omega $.
Precisely, we will count the number of orthogonal functions in $L^{2}(%
\mathbb{R})$, $\epsilon $-\emph{localized} in the sense that%
\begin{equation}
\left\Vert P_{T,\Omega }f-f\right\Vert ^{2}\leq \epsilon \text{.}  \label{a}
\end{equation}%
From our main result it follows that (\ref{estimate}) has the following
analogue in this setting: if $\eta (\epsilon ,T,\Omega )$ stands for the
maximum number of orthogonal functions of $L^{2}({\mathbb{R}})$ satisfying (%
\ref{a}), then, as $\left\vert T\right\vert \rightarrow \infty $,%
\begin{equation}
\frac{\left\vert T\right\vert \left\vert \Omega \right\vert }{2\pi }%
(1+\epsilon )+C_{\delta }\log \left( \left\vert T\right\vert \left\vert
\Omega \right\vert \right) \leq \eta (\epsilon ,T,\Omega )\leq \frac{%
\left\vert T\right\vert \left\vert \Omega \right\vert }{2\pi }(1-2\epsilon
)^{-1}+C_{\delta }\log \left( \left\vert T\right\vert \left\vert \Omega
\right\vert \right) \text{.}  \label{estimateintrol2}
\end{equation}

\subsection{Localization operators}

Our understanding of the concentration problem is based on the study of
operators which localize signals in bounded regions of the time-frequency
plane. Such operators are known in a broad sense as time-frequency
localization operators; their eigenfunctions are orthogonal sequences of
functions with optimal concentration properties. The quantitative
formulation of the concentration problem can be seen in terms of
localization operators as follows: rather than looking for the optimal
concentrated functions in a given region of the time-frequency plane, we
will allow the functions to contain a certain prescribed amount of energy
outside the given region, and estimate the resulting increase in the degrees
of freedom. Given an operator $L$, instead of counting the eigenfunctions of%
\begin{equation*}
Lf=\lambda f
\end{equation*}%
associated with eigenvalues $\lambda $ close to one, we will count
orthogonal functions $\epsilon $-\emph{localized} with respect to $L$ in the
sense that 
\begin{equation}
\left\Vert Lf-f\right\Vert ^{2}\leq \epsilon \text{.}  \label{b}
\end{equation}%
In the next paragraph we will see how the idea of $\epsilon $-localization
arises from the concept of pseudospectra of linear operators.

\subsection{Pseudospectra and $\protect\epsilon -$localization}

The result of Landau and Pollak has later been improved by Landau to several
dimensions and more general sets than intervals in \cite{Landau1967} and 
\cite{Landau1975}. Also in \cite{Landau1975}, Landau introduced the concept
of $\epsilon $-approximated eigenvalues and eigenfunctions. This concept is
a forerunner of what is nowadays known as the \emph{pseudospectra} in the
numerical analysis of non-normal matrices \cite{TE}. Recent developments in
spectral approximation theory involve the concept of $n$-pseudospectrum,
which has been introduced in \cite{Hansen2} with the purpose of
approximating the spectrum of bounded linear operators on an infinite
dimensional, separable Hilbert space, and then used in the proof of the
computability of the spectrum of a linear operator on a separable Hilbert
space \cite{Hansen}. We will recall Landau's original definition, which was
the following:

\begin{definition}
$\lambda $ is an $\epsilon $-approximated eigenvalue of $L$ if there exists $%
f$ with $\left\Vert f\right\Vert =1$, such that $\left\Vert Lf-\lambda
f\right\Vert \leq \epsilon $. We call $f$ an $\epsilon $-approximated
eigenfunction corresponding to $\lambda $.
\end{definition}

Thus, our quantitative measure (\ref{b}) for\ the time-frequency
localization of $f$ is equivalent to $f$\ being a $\epsilon $-approximated
eigenfunction corresponding to $1$.

\begin{example}
Suppose that $\varphi $ is an eigenfunction of $P_{rT,\Omega }$ with
eigenvalue $\lambda $. Then%
\begin{equation*}
\left\Vert P_{rT,\Omega }\varphi -\varphi \right\Vert =1-\lambda \text{.}
\end{equation*}%
Thus, every eigenfunction of $P_{rT,\Omega }$ is a $(1-\lambda )$%
-pseudoeigenfunction of $P_{rT,\Omega }$ with pseudoeigenvalue $1$.
\end{example}

The relevant fact is that the number of orthogonal pseudoeigenfunctions with
pseudoeigenvalue greater than a given threshold is larger than the number of
eigenfunctions with eigenvalue greater than that threshold. A large class of
functions satisfying (\ref{estimate}) arises from the set of almost
bandlimited functions in the sense of Donoho-Stark's concept of $\epsilon $%
-concentration.

\begin{example}
According to \cite{DonohoStark}, $f$ is $\epsilon _{T}$-concentrated in $T$
if 
\begin{equation*}
\left\Vert D_{T}f-f\right\Vert \leq \epsilon _{T}\text{ }
\end{equation*}%
and its Fourier transform $Ff$ (see definitions in the next section) is $%
\epsilon _{\Omega }$-concentrated in $\Omega $\ if 
\begin{equation}
\left\Vert B_{\Omega }f-f\right\Vert \leq \epsilon _{\Omega }\text{. }
\label{almostbandlim}
\end{equation}%
An application of the triangle inequality shows that if $f$ is $\epsilon
_{T} $-concentrated in $T$ and $Ff$ is $\epsilon _{\Omega }$-concentrated in 
$\Omega $\ \ then 
\begin{equation*}
\left\Vert B_{\Omega }D_{T}f-f\right\Vert \leq \epsilon _{T}+\epsilon
_{\Omega }\text{. }
\end{equation*}%
and another application of the triangle inequality gives%
\begin{equation}
\left\Vert P_{T,\Omega }f-f\right\Vert \leq 2\epsilon _{T}+\epsilon _{\Omega
}\text{.}  \label{epsilonconc}
\end{equation}%
Thus, if $f$ is $\epsilon _{T}$-concentrated in $T$ and$\ Ff$ is $\epsilon $%
-concentrated in $\Omega $, then $f$ is a $(2\epsilon _{T}+\epsilon _{\Omega
})$-pseudoeigenfunction of $P_{rT,\Omega }$ with pseudoeigenvalue $1$.
\end{example}

One should notice that these notions, as well as the topic investigated in
this note, can be related to Slepian's philosophical and mathematical quest 
\cite{Slepian}, aiming at solving the bandwidth paradox: \emph{%
\textquotedblleft It is easy to argue that real signals must be bandlimited.
It is also easy to argue that they cannot be so\textquotedblright\ }\cite%
{Slepian}

\subsection{Organization of the paper}

This is essentially a single-result paper, which is Theorem 1 in the next
section. We first provide some background concerning Landau's results about
the extension of the time-band limiting problem to functions in $\mathbb{R}%
^{d}$, bandlimited to a set of finite measure and the main notations. The
last section of the paper is devoted to another important class of operators
where our results apply, namely Gabor localization operators. Since the
proofs for Gabor localization operators are very similar to those in section
2, they are omitted.

\section{Notations and main results}

\subsection{Time- and band- limiting operators}

A description of the general set-up of \cite{Landau1967} and \cite%
{Landau1975} follows. The sets $T$ and $\Omega $ are general subsets of
finite measure of $%
\mathbb{R}
^{d}$. Let%
\begin{equation*}
Ff(\xi )=\frac{1}{(2\pi )^{d/2}}\int_{\mathbb{R}^{d}}f(t)e^{-i\xi t}dt
\end{equation*}%
denote the Fourier transform of a function $f\in L^{1}({\mathbb{R}}^{d})\cap
L^{2}({\mathbb{R}}^{d})$. The subspaces of $L^{2}({\mathbb{R}}^{d})$
consisting, respectively, of the functions supported in $T$ and of those
whose Fourier transform is supported in $\Omega $ are%
\begin{eqnarray*}
\mathcal{D}(T) &=&\{f\in L^{2}({\mathbb{R}}^{d}):f(x)=0,x\notin T\} \\
\mathcal{B}(\Omega ) &=&\{f\in L^{2}({\mathbb{R}}^{d}):Ff(\xi )=0,\xi \notin
\Omega \}.
\end{eqnarray*}%
Let $D_{T}$ be the orthogonal projection of $L^{2}({\mathbb{R}}^{d})$ onto $%
\mathcal{D}(T)$, given explicitly by the multiplication of a characteristic
function of the set $T$ by $f$: 
\begin{equation*}
D_{T}f(t)=\chi _{T}(t)f(t)
\end{equation*}%
and let $B_{\Omega }$ be the orthogonal projection of $L^{2}({\mathbb{R}}%
^{d})$ onto $\mathcal{B}(\Omega )$, given explicitly as%
\begin{equation*}
B_{\Omega }f=F^{-1}\chi _{\Omega }Ff=\frac{1}{(2\pi )^{d/2}}\int_{{\mathbb{R}%
}^{d}}h(x-y)f(y)dy,
\end{equation*}%
where $Fh=\chi _{\Omega }$.\ The following Theorem, comprising Lemma 1 and
Theorem 1 of \cite{Landau1975} gives important information concerning the
spectral problem associated to the operator $D_{rT}B_{\Omega }D_{rT}$. This
information will be essential in our proofs. The notation $o(r^{d})$ refers
to behavior as $r\rightarrow \infty $.

\textbf{Theorem A }\cite{Landau1975}. \emph{The operator }$D_{rT}B_{\Omega
}D_{rT}$\emph{\ is bounded by }$1$\emph{, self-adjoint, positive, and
completely continuous. Denoting its set of eigenvalues, arranged in
nonincreasing order, by }$\{\lambda _{k}(r,T,\Omega )\}$\emph{, we have}

\begin{eqnarray*}
\sum_{k=0}^{\infty }\lambda _{k}(r,T,\Omega ) &=&r^{d}\left( 2\pi \right)
^{-d}\left\vert T\right\vert \left\vert \Omega \right\vert \\
\sum_{k=0}^{\infty }\lambda _{k}^{2}(r,T,\Omega ) &=&r^{d}\left( 2\pi
\right) ^{-d}\left\vert T\right\vert \left\vert \Omega \right\vert -o(r^{d})%
\text{.}
\end{eqnarray*}%
\emph{Moreover, given }$0<\gamma <1$\emph{, the number }$M_{r}(\gamma )$%
\emph{\ of eigenvalues which are not smaller than }$\gamma $\emph{,
satisfies, as }$r\rightarrow \infty $\emph{, }%
\begin{equation*}
M_{r}(\gamma )=\left( 2\pi \right) ^{-d}\left\vert T\right\vert \left\vert
\Omega \right\vert r^{d}+o(r^{d})\text{.}
\end{equation*}

We are now in a position to state and prove our main theorem. The lower
inequality is proved by constructing a set of orthonormal functions of $%
L^{2}({\mathbb{R}}^{d})$ satisfying (\ref{localization}). The proof of the
upper inequality uses some of the techniques contained in Landau's proof of
the non-hermitian Szeg\"{o}-type theorem \cite[Theorem 3]{Landau1975}.

\begin{theorem}
Let $\eta (\epsilon ,rT,\Omega )$ stand for the maximum number of
orthonormal functions $f\in L^{2}({\mathbb{R}}^{d})$ such that 
\begin{equation}
\left\Vert P_{rT,\Omega }f-f\right\Vert ^{2}\leq \epsilon .
\label{almostbandlimited}
\end{equation}%
Then, as $r\rightarrow \infty $, the following inequalities hold:%
\begin{equation}
\frac{\left\vert T\right\vert \left\vert \Omega \right\vert }{(2\pi )^{d}}%
(1+\epsilon )\leq \lim_{r\rightarrow \infty }\frac{\eta (\epsilon ,rT,\Omega
)}{r^{d}}\leq \frac{\left\vert T\right\vert \left\vert \Omega \right\vert }{%
(2\pi )^{d}}\left( 1-2\epsilon \right) ^{-1}\text{.}  \label{main}
\end{equation}
\end{theorem}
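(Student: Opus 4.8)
The plan is to pass to the spectral decomposition of the compact self-adjoint operator $P_{rT,\Omega}$ supplied by Theorem A and to reduce the counting problem to a scalar inequality on its eigenvalues. Let $\{\varphi_k\}$ be an orthonormal basis of eigenfunctions with eigenvalues $\lambda_k=\lambda_k(r,T,\Omega)$ arranged nonincreasingly. For a unit vector $f=\sum_k c_k\varphi_k$ one has $\|P_{rT,\Omega}f-f\|^2=\sum_k|c_k|^2(1-\lambda_k)^2$, so the localization constraint (\ref{almostbandlimited}) is exactly $\langle(I-P_{rT,\Omega})^2 f,f\rangle\le\epsilon$, a quadratic form whose weights $(1-\lambda_k)^2$ increase with $k$. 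Thus $\eta(\epsilon,rT,\Omega)$ is the largest number of orthonormal vectors each of which this quadratic form sends into $[0,\epsilon]$.

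For the upper bound I would sum the constraints over an admissible orthonormal family $f_1,\dots,f_N$. Writing $Q$ for the orthogonal projection onto their span and $A=(I-P_{rT,\Omega})^2$, the Ky Fan minimum principle gives $N\epsilon\ge\sum_{j}\langle Af_j,f_j\rangle=\mathrm{tr}(QAQ)\ge\sum_{k=0}^{N-1}(1-\lambda_k)^2$, since the $N$ smallest eigenvalues of $A$ are precisely $(1-\lambda_0)^2\le\cdots\le(1-\lambda_{N-1})^2$. Expanding $(1-\lambda_k)^2=1-(2\lambda_k-\lambda_k^2)$ turns this into $N(1-\epsilon)\le\sum_{k=0}^{N-1}(2\lambda_k-\lambda_k^2)$. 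Because every summand $2\lambda_k-\lambda_k^2$ is nonnegative, the right-hand side is at most $\sum_{k=0}^{\infty}(2\lambda_k-\lambda_k^2)=2\sum_k\lambda_k-\sum_k\lambda_k^2$, which by the two trace identities of Theorem A equals $(2\pi)^{-d}|T||\Omega|r^d+o(r^d)$. Dividing by $r^d(1-\epsilon)$ and letting $r\to\infty$ yields the upper inequality in (\ref{main}), indeed with the sharper constant $(1-\epsilon)^{-1}\le(1-2\epsilon)^{-1}$.

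For the lower bound I would realize the introduction's heuristic — splitting the concentrated energy of the top eigenfunctions among more vectors — by an explicit unitary mixing. On $\mathrm{span}(\varphi_0,\dots,\varphi_{N-1})$ choose a flat unitary $U$ with $|U_{jk}|^2=1/N$ (a normalized discrete Fourier or Hadamard matrix) and set $f_j=\sum_{k=0}^{N-1}U_{jk}\varphi_k$. These are orthonormal in $L^2(\mathbb{R}^d)$ and each has identical cost $\|P_{rT,\Omega}f_j-f_j\|^2=\frac1N\sum_{k=0}^{N-1}(1-\lambda_k)^2$, so the family is admissible as soon as $\sum_{k=0}^{N-1}(1-\lambda_k)^2\le N\epsilon$, the very threshold from the upper bound. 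To see this threshold is large, take $N=M_r(1-\delta)+p$, bound $\sum_{k<N}(1-\lambda_k)^2\le\delta^2 M_r(1-\delta)+p$ (the first $M_r(1-\delta)$ eigenvalues exceed $1-\delta$, the remaining $p$ terms are at most $1$), and choose $p$ as large as the budget $N\epsilon$ permits. Inserting $M_r(1-\delta)=(2\pi)^{-d}|T||\Omega|r^d+o(r^d)$ and letting $\delta\to0$ produces the lower inequality in (\ref{main}), again with the sharper $(1-\epsilon)^{-1}\ge 1+\epsilon$.

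The main work lies in the lower bound: one must exhibit an orthonormal family that genuinely exceeds the prolate count, and the flat-unitary redistribution is the device that converts the energy-splitting heuristic into orthonormal functions with a uniform, controllable cost. The recurring technical constraint is that Theorem A offers only the first two eigenvalue moments and the counting function $M_r(\gamma)$ for fixed $\gamma$; it gives no handle on the individual eigenvalues inside the order-$r^d$ plunge region. Both estimates are therefore designed to be insensitive to the plunge — the upper bound by discarding the nonnegative tail $\sum_{k\ge N}(2\lambda_k-\lambda_k^2)$, the lower bound by charging the extra $p$ directions at the worst-case cost $1$ and diluting them equally over all $N$ outputs — and the delicate point is to verify that these crude steps still deliver matching $r^d$-coefficients. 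The remaining items, namely the existence of an equimodular unitary in each finite dimension and the fact that $A=(I-P_{rT,\Omega})^2$ has essential spectrum $\{1\}$ so that Ky Fan governs its genuinely discrete low-lying eigenvalues, are routine.
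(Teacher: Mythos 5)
Your proposal is correct, and it takes a genuinely different route from the paper's proof --- in fact it proves more. For the upper bound, the paper never works with $(I-P)^2$ (write $P=P_{rT,\Omega}$): it compares $Pf$ with $B_{\Omega}D_{rT}f$, discards $o(r^d)$ functions having non-negligible component in the ``plunge'' subspace $\mathcal{E}(\delta)$ spanned by eigenfunctions with $\delta<\lambda_k<1-\delta$ (via a projection/trace counting argument), deduces $\langle Pf,f\rangle\ge 1-2\epsilon$ for the remaining functions, and sums against $\mathrm{tr}(P)$; this is what produces the constant $(1-2\epsilon)^{-1}$. You instead observe that the constraint (\ref{almostbandlimited}) is exactly $\langle(2P-P^2)f,f\rangle\ge 1-\epsilon$ for unit vectors, and sum against $\mathrm{tr}(2P-P^2)=r^d(2\pi)^{-d}|T||\Omega|+o(r^d)$, which uses both moment identities of Theorem A at once; the plunge region needs no separate treatment because the second-moment identity already encodes that it is $o(r^d)$, and you obtain the sharper constant $(1-\epsilon)^{-1}$. (You could even skip Ky Fan: for the positive trace-class operator $B=2P-P^2$ one has $\sum_j\langle Bf_j,f_j\rangle\le\mathrm{tr}(B)$ for any orthonormal family, by completing it to a basis.) For the lower bound, the paper partitions the eigenfunctions with $\lambda_k>1-\sigma$ into blocks of integer size $n$ with $n\le 1/\gamma\le n+1$ and converts each block into $n+1$ admissible orthonormal functions via an explicit orthogonal matrix plus one kernel vector $h_i$; the gain $(n+1)/n$ is then bounded below by $1+\gamma$, and this integer rounding is precisely why the paper ends at $1+\epsilon$. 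Your single flat (DFT) unitary mixing all $M=M_r(1-\delta)$ top eigenfunctions with $p$ cheap extra directions implements the same energy-splitting idea, but the ratio $p/N$ is tunable to precision $1/N=O(r^{-d})$, which yields the gain $(1-\delta^2)/(1-\epsilon)\to(1-\epsilon)^{-1}$. Since your two constants match, you in fact show that $\lim_{r\to\infty}\eta(\epsilon,rT,\Omega)/r^d$ exists and equals $(2\pi)^{-d}|T||\Omega|(1-\epsilon)^{-1}$, strictly stronger than (\ref{main}) on both sides (and still meaningful for $\epsilon\ge 1/2$, where the paper's upper bound is vacuous). Only two trivial points deserve mention in a careful write-up: if $N$ exceeds the rank of $P$, the extra directions should be taken from the infinite-dimensional kernel of $P$ (their cost is exactly $1$, so your estimates are unchanged), and the limit $\delta\to 0$ must be taken after $r\to\infty$, as you indicate.
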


\begin{proof}
We first prove the lower inequality in (\ref{main}). Suppose (\ref%
{almostbandlimited}) holds for a positive real $\varepsilon $. Let $\sigma >0
$ be such that $\sigma ^{2}\leq \varepsilon $ and let $\mathcal{F}=\{\phi
_{k}\}$ be the normalized system of eigenfunctions of the operator $%
P_{rT,\Omega }$ with eigenvalues $\lambda _{k}>1-\sigma $. Now, given $f\in
L^{2}({\mathbb{R}}^{d})$, write%
\begin{equation}
f=\sum a_{k}\phi _{k}+h\text{,}  \label{expansf}
\end{equation}%
with $h\in $Ker$\left( P_{rT,\Omega }\right) $. Then%
\begin{equation}
P_{rT,\Omega }f=\sum a_{k}\lambda _{k}\phi _{k}  \label{fkern}
\end{equation}%
and%
\begin{eqnarray}
\left\Vert P_{rT,\Omega }f-f\right\Vert ^{2} &=&\left\Vert \sum (1-\lambda
_{k})a_{k}\phi _{k}+h\right\Vert ^{2}  \notag \\
&\leq &\sigma ^{2}\sum \left\vert a_{k}\right\vert ^{2}+\left\Vert
h\right\Vert ^{2}  \notag \\
&=&\sigma ^{2}\left\Vert f\right\Vert ^{2}+(1-\sigma ^{2})\left\Vert
h\right\Vert ^{2}\text{.}  \label{third}
\end{eqnarray}%
For the given $\sigma >0$ we pick a real number $\gamma $ such that 
\begin{equation}
\sigma ^{2}+(1-\sigma ^{2})\gamma =\varepsilon \text{,}  \label{epsilon}
\end{equation}%
Writing (\ref{epsilon}) as 
\begin{equation}
\gamma =\frac{\varepsilon -\sigma ^{2}}{1-\sigma ^{2}}\text{,}
\end{equation}%
it is clear that $\gamma $ is a positive increasing function of $\sigma $,
and that $\gamma \rightarrow \varepsilon $ as $\sigma \rightarrow 0$. Now
choose an integer number $n$ such that 
\begin{equation}
n\leq \frac{1}{\gamma }\leq n+1\text{.}  \label{integer}
\end{equation}%
We proceed further by considering the following partition of $\mathcal{F}$
into subsets $\mathcal{F}_{i}$, each of them containing $n$ functions:%
\begin{equation}
\mathcal{F}=\mathcal{F}_{1}\cup ...\cup \mathcal{F}_{l}\cup \mathcal{F}%
_{residual}\text{,}  \label{particion}
\end{equation}%
where the partition is made in such a way that the set $\mathcal{F}%
_{residual}$ contains only\ $o(r^{d})$ functions.\ This is possible to do
because Theorem A tells us that $\#\mathcal{F}=\frac{\left\vert T\right\vert
\left\vert \Omega \right\vert }{(2\pi )^{d}}r^{d}+o(r^{d})$.\ With each set $%
\mathcal{F}_{i}$ associate $h_{i}$ such that $h_{i}\in $Ker$\left(
P_{rT,\Omega }\right) $ and such that 
\begin{equation}
\left\langle h_{i},h_{j}\right\rangle =\delta _{i,j}\text{.}  \label{horthog}
\end{equation}%
This can be done since Ker$\left( P_{rT,\Omega }\right) $ has infinite
dimension, due to the inclusion $\mathcal{D}({\mathbb{R}}^{d}-rT)\subset $Ker%
$\left( P_{rT,\Omega }\right) $. Now, for each $i$, let $\{\psi
_{j}^{(i)}\}_{j=1}^{n+1}$ be a set of linear combinations of functions of $%
\mathcal{F}$ such that%
\begin{equation}
\left\langle \psi _{k}^{(i)},\psi _{j}^{(i)}\right\rangle =\left\{ 
\begin{array}{c}
-\frac{1}{n+1}\text{ \ \ \ \ }if\text{ \ \ }k\neq j \\ 
1-\frac{1}{n+1}\text{\ }if\text{ \ \ }k=j%
\end{array}%
\right. \text{,}  \label{ortfunct}
\end{equation}%
which can be constructed using a linear algebra argument as in the next
paragraph.

Consider a linear transformation $U:\mathbb{R}^{n}\longrightarrow \mathcal{F}%
_{i}$ mapping each vector of the canonical basis of $\mathbb{R}^{n}$ to each
of the given $n$ orthogonal functions of $\mathcal{F}_{i}$. Let $V$ be the
subspace of $\mathbb{R}^{n+1}$ which is orthogonal to the vector $v_{0}=%
\left[ \sqrt{\frac{1}{n+1}},...,\sqrt{\frac{1}{n+1}}\right] ^{T}\in \mathbb{R%
}^{n+1}$ and let $\{v_{1},...,v_{n}\}$ be an orthonormal basis of $V$.
Clearly, $\left\Vert v_{0}\right\Vert =1$ and, for $i=1,...,n$, $%
\left\langle v_{0},v_{i}\right\rangle =0$. Thus, the matrix%
\begin{equation*}
Q=\left[ 
\begin{array}{cccc}
v_{0} & v_{1} & ... & v_{n+1}%
\end{array}%
\right] \in \mathbb{R}^{(n+1)\times (n+1)}
\end{equation*}%
is orthogonal. If $u_{1},...,u_{n+1}\in \mathbb{R}^{n+1}$ are the rows of $Q$
then%
\begin{equation*}
Q^{T}=\left[ 
\begin{array}{cccc}
u_{1} &  & ... & u_{n+1}%
\end{array}%
\right] \in \mathbb{R}^{(n+1)\times (n+1)}
\end{equation*}%
is also orthogonal, we have $\left\langle u_{i},u_{j}\right\rangle =\delta
_{i,j}$. Let $u_{1}^{\prime },...,u_{n+1}^{\prime }\in \mathbb{R}^{n}$ be
the rows of $Q$ without the elements of the first column. They satisfy 
\begin{equation*}
\left\langle u_{k}^{\prime },u_{j}^{\prime }\right\rangle =\left\langle
u_{k},u_{j}\right\rangle -\frac{1}{n+1}=\left\{ 
\begin{array}{c}
-\frac{1}{n+1}\text{ \ \ \ \ }if\text{ \ \ }k\neq j \\ 
1-\frac{1}{n+1}\text{\ }if\text{ \ \ }k=j%
\end{array}%
\right. \text{,}
\end{equation*}%
and the functions in (\ref{ortfunct}) are obtained setting $\psi
_{j}^{(i)}=Uu_{j}^{\prime }$.

We are now in a position to construct the desired orthonormal system. Define
a sequence of orthonormal functions $\{\Phi _{j}^{(i)}\}_{i=1}^{l}$ using
the functions $\psi _{j}^{(i)}$ from (\ref{ortfunct}): 
\begin{equation}
\Phi _{j}^{(i)}=\psi _{j}^{(i)}+\sqrt{\frac{1}{n+1}}h_{i}\text{.}
\label{ortprinc}
\end{equation}%
Since $\psi _{j}^{(i)}$ are linear combinations of elements of $\mathcal{F}%
=\{\phi _{k}\}$, (\ref{ortprinc}) is a representation of the form (\ref%
{expansf}). Thus, (\ref{ortfunct}) and (\ref{horthog}) show that indeed $%
\left\langle \Phi _{k}^{(i)},\Phi _{j}^{(i)}\right\rangle $ $=\delta _{k,j}$
and we can apply (\ref{third}), (\ref{integer}) and (\ref{epsilon}) to obtain%
\begin{eqnarray*}
\left\Vert P_{rT,\Omega }\Phi _{j}^{(i)}-\Phi _{j}^{(i)}\right\Vert ^{2}
&\leq &\sigma ^{2}\left\Vert \Phi _{j}^{(i)}\right\Vert ^{2}+(1-\sigma
^{2})\left\Vert \sqrt{\frac{1}{n+1}}h_{i}\right\Vert ^{2} \\
&\leq &\sigma ^{2}+(1-\sigma ^{2})\gamma \\
&=&\varepsilon \text{.}
\end{eqnarray*}%
Thus, the functions in $\{\Phi _{j}^{(i)}\}_{j=1}^{n+1}$ verify (\ref%
{almostbandlimited}) and $\#\{\Phi _{j}^{(i)}\}_{j=1}^{n+1}=n+1$. We have
also $\#\mathcal{F}_{i}=n$, thus, 
\begin{equation*}
\#\{\Phi _{j}^{(i)}\}_{j=1}^{n+1}=\frac{n+1}{n}\#\mathcal{F}_{i}\text{.}
\end{equation*}%
Now, the cardinality of the union of all the sequences $\{\Phi _{j}^{(i)}\}$
obtained according to the above procedure is%
\begin{eqnarray*}
\#\left[ \cup _{i=1}^{l}\{\Phi _{j}^{(i)}\}_{j=1}^{n+1}\right] &=&\frac{n+1}{%
n}\#\left[ \cup _{i=1}^{l}\mathcal{F}_{i}\right] \\
&=&\frac{n+1}{n}\#\left[ \mathcal{F}-\mathcal{F}_{residual}\right] \\
&=&\frac{n+1}{n}(r^{d}\left( 2\pi \right) ^{-d}\left\vert T\right\vert
\left\vert \Omega \right\vert +o(r^{d})) \\
&\geq &\frac{\frac{1}{\gamma }+1}{\frac{1}{\gamma }}r^{d}\left( 2\pi \right)
^{-d}\left\vert T\right\vert \left\vert \Omega \right\vert +o(r^{d}) \\
&=&(1+\gamma )r^{d}\left( 2\pi \right) ^{-d}\left\vert T\right\vert
\left\vert \Omega \right\vert +o(r^{d})\text{.}
\end{eqnarray*}%
We have used Proposition 1 in the third equality (the fact that the
dimension of $\mathcal{F}$ is $r^{d}\left( 2\pi \right) ^{-d}\left\vert
T\right\vert \left\vert \Omega \right\vert +o(r^{d})$ and the fact that $%
\mathcal{F}_{residual}$ contains only $o(r^{d})$ functions). Denote by $%
M(rT,\Omega ,\epsilon )$ the minimum number of orthonormal functions
satisfying (\ref{almostbandlimited}). By construction we have obtained%
\begin{equation*}
M(rT,\Omega ,\epsilon )\geq \#\left[ \cup _{i=1}^{l}\{\Phi
_{j}^{(i)}\}_{j=1}^{n+1}\right] \geq (1+\gamma )r^{d}\left( 2\pi \right)
^{-d}\left\vert T\right\vert \left\vert \Omega \right\vert +o(r^{d})\text{.}
\end{equation*}%
and now we take $\sigma \rightarrow 0$, so that $\gamma \rightarrow \epsilon 
$ and we obtain%
\begin{equation*}
M(rT,\Omega ,\epsilon )\geq \#\left[ \cup _{i=1}^{l}\{\Phi
_{j}^{(i)}\}_{j=1}^{n+1}\right] \geq (1+\epsilon )r^{d}\left( 2\pi \right)
^{-d}\left\vert T\right\vert \left\vert \Omega \right\vert +o(r^{d})\text{.}
\end{equation*}%
This proves the lower inequality in (\ref{main}).

Let us now prove the upper inequality in (\ref{main}). Consider again $%
f=\sum a_{k}\phi _{k}+h$ with $h\in $Ker$\left( P_{rT,\Omega }\right) $.
Then, using (\ref{fkern}) and%
\begin{equation*}
\left\Vert B_{\Omega }D_{rT}f\right\Vert ^{2}=\left\langle P_{rT,\Omega
}f,f\right\rangle =\sum \left\vert a_{k}\right\vert ^{2}\lambda _{k}\text{,}
\end{equation*}%
together with the fact that $D_{rT}$ is a projection, one can write%
\begin{equation}
\left\Vert B_{\Omega }D_{rT}f-P_{rT,\Omega }f\right\Vert ^{2}=\left\Vert
B_{\Omega }D_{rT}f\right\Vert ^{2}-\left\Vert P_{rT,\Omega }f\right\Vert
^{2}=\sum \left\vert a_{k}\right\vert ^{2}\lambda _{k}(1-\lambda _{k})\text{.%
}  \label{triangle}
\end{equation}%
Now, for $\delta >0$ define $\mathcal{E}(\delta )$ as the subspace generated
by the eigenfunctions of $P_{rT,\Omega }$ such that the corresponding
eigenvalues satisfy $\delta <\lambda _{k}<1-\delta $ and let 
\begin{equation*}
\mathcal{F}(\delta )=\left\{ f\in L^{2}({\mathbb{R}}^{d}):\left\Vert
f\right\Vert =1\text{ \ \ }\sum_{\delta <\lambda _{k}<1-\delta }\left\vert
a_{k}\right\vert ^{2}\leq \delta \right\} \text{.}
\end{equation*}%
For $f\in \mathcal{F}(\delta )$,%
\begin{eqnarray*}
&&\left\Vert B_{\Omega }D_{rT}f-P_{rT,\Omega }f\right\Vert ^{2} \\
&=&\sum_{\lambda _{k}\leq \delta }\left\vert a_{k}\right\vert ^{2}\lambda
_{k}(1-\lambda _{k})+\sum_{\delta <\lambda _{k}<1-\delta }\left\vert
a_{k}\right\vert ^{2}\lambda _{k}(1-\lambda _{k})+\sum_{\lambda _{k}\geq
1-\delta }\left\vert a_{k}\right\vert ^{2}\lambda _{k}(1-\lambda _{k})\leq
3\delta \text{.}
\end{eqnarray*}%
Thus, $\delta $ can be chosen in such a way that 
\begin{equation}
\left\Vert B_{\Omega }D_{rT}f-P_{rT,\Omega }f\right\Vert ^{2}\leq
\varepsilon \text{.}  \label{approximation}
\end{equation}%
Let us assume the existence of a set $\mathcal{N}$ of $\eta (\epsilon
,rT,\Omega )$ orthonormal functions of $L^{2}({\mathbb{R}}^{d})$ satisfying (%
\ref{almostbandlimited}). To estimate how many of them belong to $\mathcal{F}%
(\delta )$, consider two subspaces $\mathcal{E}$ and $\mathcal{G}$ with
corresponding projections $E,G,$ and dimensions $e$ and $g$ respectively,
with $e<g$. Let $v_{1},...,v_{g}$ be an orthonormal set in $\mathcal{G}$.
Then $\sum \left\Vert Ev_{i}\right\Vert ^{2}=\sum \left( Ev_{i},v_{i}\right)
=\sum \left( GEGv_{i},v_{i}\right) $ represents the trace of the operator $%
GEG$, independent of the choice of basis. Choose the basis $\{w_{i}\}$ such
that the first vectors are in $\mathcal{GE}$ and the remaining vectors in
the orthogonal complement in $\mathcal{G}$ of $\mathcal{GE}$ (the image of $%
GE$). For each of the latter, $\left( GEGw,w\right) =0$, while the dimension
of $\mathcal{GE}$ is at most $e$. Hence $\sum \left\Vert Ev_{i}\right\Vert
^{2}=\sum_{1}^{g}\left( Ew_{i},w_{i}\right) \leq \sum_{1}^{e}\left(
GEGw_{i},w_{i}\right) \leq e$. Thus, the number of orthonormal vectors $%
\{v_{i}\}$ for which $\left\Vert Ev_{i}\right\Vert ^{2}\geq \delta $ cannot
exceed $e/\delta $.

As a result of the previous paragraph, after excluding from $\mathcal{N}$ at
most $\delta ^{-1}\dim \mathcal{E}(\delta )$ elements, those remaining are
in $\mathcal{F}(\delta )$. Since, from Theorem A, we have $\dim \mathcal{E}%
(\delta )=o(r^{d})$, there are $\eta (\epsilon ,rT,\Omega )-o(r^{d})$
functions in $\mathcal{N}\cap \mathcal{F}(\delta )$. Let $f$ be one of them.
Now we can use (\ref{almostbandlimited}), (\ref{approximation}) and the
triangle inequality to obtain%
\begin{equation*}
1-\left\Vert B_{\Omega }D_{rT}f\right\Vert ^{2}\leq \left\Vert B_{\Omega
}D_{rT}f-f\right\Vert \leq 2\varepsilon \text{,}
\end{equation*}%
leading to $\left\Vert B_{\Omega }D_{rT}f\right\Vert ^{2}\geq 1-2\varepsilon 
$, for each of the $\eta (\epsilon ,rT,\Omega )-o(r^{d})$ orthonormal
functions. Since $\left\Vert B_{\Omega }D_{rT}f\right\Vert ^{2}=\left\langle
P_{rT,\Omega }f,f\right\rangle $, the sum of these terms for any orthonormal
set cannot exceed the trace of $D_{rT}B_{\Omega }D_{rT}$. Thus, using the
trace from Theorem A, we conclude that%
\begin{equation*}
(1-2\varepsilon )\left( \eta (\epsilon ,rT,\Omega )-o(r^{d})\right) \leq
\sum_{k=0}^{\infty }\lambda _{k}(r,T,\Omega )=r^{d}\left( 2\pi \right)
^{-d}\left\vert T\right\vert \left\vert \Omega \right\vert \text{,}
\end{equation*}%
leading to the upper inequality in (\ref{main}).
\end{proof}

\begin{remark}
In the case where $T$ and $\Omega $ are finite unions of bounded intervals,
the term $o(r)$ in Theorem A can be replaced by $\log r$ \cite{LP}, \cite%
{Landau1967}. Thus, (\ref{estimateintrol2}) follows using this estimate in
our proofs of Theorem 1. See the recent monograph \cite{HogLak} for more
estimates on the eigenvalues of the time- and band- limiting operator.
\end{remark}

\begin{remark}
It is possible to obtain an analogue of Theorem 1 in the set up of the
Hankel transform. The result corresponding to Theorem A has been proved in 
\cite{AB}.
\end{remark}

\begin{remark}
The proof of the lower inequality\ in (\ref{main}) constructs a new set of
orthogonal functions. On the one side we don't know yet to what extent such
functions can be used in applications. On the other side the lower
inequality in (\ref{main}) may provide useful information in cases where
signals are approximated by functions which are not optimal concentrated as
the prolates, but still have some concentration properties. This is the case
of the Hermite functions, where an estimate of the energy left outside $%
\Omega $ may provide an indication of the increase in the number of
functions required to avoid undersampling.
\end{remark}

\section{Gabor localization operators}

The Gabor (or short-time Fourier) transform of a function or distribution $f$
with respect to a window function $g\in L^{2}(\mathbb{R}^{d})$ is defined to
be, for $z=(x,\xi )\in \mathbb{R}^{2d}$: 
\begin{equation}
\mathcal{V}_{g}f(z)=\mathcal{V}_{g}f(x,\xi )=\int_{\mathbb{R}^{d}}f(t)%
\overline{g(t-x)}e^{-2\pi i\xi t}dt\text{.}  \label{Gabor}
\end{equation}%
The following relations are usually called the orthogonal relations for the
short-time Fourier transform. Let $f_{1},f_{2},g_{1},g_{2}\in L^{2}(%
\mathbb{R}
^{d})$. Then $V_{g_{1}}f_{1},V_{g_{2}}f_{2}\in L^{2}(%
\mathbb{R}
^{2d})$ and 
\begin{equation}
\int \int_{\mathbb{R}^{2d}}V_{g_{1}}f_{1}(x,\xi )\overline{%
V_{g_{2}}f_{2}(x,\xi )}dxd\xi =\left\langle f_{1},f_{2}\right\rangle _{L^{2}(%
\mathbb{R}
^{d})}\overline{\left\langle g_{1},g_{2}\right\rangle }_{L^{2}(%
\mathbb{R}
^{d})}\text{.}  \label{ortogonalityrelations}
\end{equation}%
The localization operator which concentrates the time-frequency content of a
function in the region $S$ operator $\mathcal{C}_{S}:L^{2}(%
\mathbb{R}
^{d})\rightarrow L^{2}(%
\mathbb{R}
^{d})$ can be defined weakly as%
\begin{equation*}
\left\langle \mathcal{C}_{S}f,h\right\rangle =\int \int_{S}\mathcal{V}%
_{g}f(x,\xi )\overline{\mathcal{V}_{g}h(x,\xi )}dxd\xi \text{,}
\end{equation*}%
for all $f,g\in L^{2}(%
\mathbb{R}
^{d}).$ These operators have been introduced in time-frequency analysis by
Daubechies \cite{da88}. Since then, applications and connections to several
mathematical topics, namely complex and harmonic analysis \cite{Seip0}, \cite%
{AD}, \cite{cogr03}, \cite{GT} have been found. The eigenvalue problem has
been object of a detailed study in \cite{RT}, \cite{FN} and \cite{FeiNow}.

The image of $L^{2}(%
\mathbb{R}
^{d})$ under the Gabor transform with the window $g$ will be named as the 
\emph{Gabor space} $\mathcal{G}_{g}$. It is the following subspace of $L^{2}(%
\mathbb{R}
^{2d})$:%
\begin{equation*}
\mathcal{G}_{g}=\left\{ V_{g}f:f\in L^{2}(%
\mathbb{R}
^{d})\right\} \text{.}
\end{equation*}%
The reproducing kernel of the Gabor space $\mathcal{G}_{g}$ is 
\begin{equation}
K_{g}(z,w)=\left\langle \pi _{z}g,\pi _{w}g\right\rangle _{L^{2}(%
\mathbb{R}
^{d})}  \label{repdef}
\end{equation}%
and the projection operator $\mathcal{P}_{g}:L^{2}(%
\mathbb{R}
^{2d})\rightarrow \mathcal{G}_{g}$,%
\begin{equation*}
\mathcal{P}_{g}F(z)=\int F(w)\overline{K_{g}(z,w)}dw\text{.}
\end{equation*}%
It is shown in \cite{RT} that, for $F\in \mathcal{G}_{g},$%
\begin{equation*}
\mathcal{V}_{g}\mathcal{C}_{S}\mathcal{V}_{g}^{-1}F(z)=\int_{S}F(w)\overline{%
K_{g}(z,w)}dw=\mathcal{P}_{g}D_{S}F(z)\text{.}
\end{equation*}%
For the whole $L^{2}({\mathbb{R}}^{2d})$ one can write%
\begin{equation*}
\mathcal{V}_{g}\mathcal{C}_{S}\mathcal{V}_{g}^{\ast }=\mathcal{P}_{g}D_{S}%
\text{.}
\end{equation*}%
Thus, the spectral properties of $\mathcal{C}_{S}$\ are identical to those
of $\mathcal{P}_{g}D_{S}$. Moreover, the\ operator $D_{S}\mathcal{P}%
_{g}D_{S} $ in $L^{2}({\mathbb{R}}^{2d})$ and the operator $\mathcal{P}%
_{g}D_{S}$ have the same nonzero eigenvalues with multiplicity (see Lemma 1
in \cite{RT}). The analogue of Theorem A in this context is the following.

\textbf{Theorem B }\cite{RT}.\emph{\ The operator }$D_{rS}P_{g}D_{rS}$\emph{%
\ is bounded by }$1$\emph{, self-adjoint, positive, and completely
continuous. Denoting its set of eigenvalues, arranged in nonincreasing
order, by }$\{\lambda _{k}(rS)\}$\emph{, we have}%
\begin{eqnarray*}
\sum_{k=0}^{\infty }\lambda _{k}(rS) &=&r^{d}\left\vert S\right\vert \\
\sum_{k=0}^{\infty }\lambda _{k}^{2}(rS) &=&r^{d}\left\vert S\right\vert
-o(r^{d})\text{.}
\end{eqnarray*}%
\emph{Moreover, given }$0<\gamma <1$\emph{, the number }$M_{r}(\gamma )$%
\emph{\ of eigenvalues which are not smaller than }$\gamma $\emph{,
satisfies, as }$r\rightarrow \infty $\emph{, }%
\begin{equation*}
M_{r}(\gamma )=r^{d}\left\vert S\right\vert +o(r^{d})\text{.}
\end{equation*}

Now that we have described the Gabor set-up in a close analogy to the band-
time- limiting case, we obtain an analogue of Theorem 1 by performing minor
adaptations in the proof.

\begin{theorem}
Let $\eta (\epsilon ,rS)$ stand for the maximum number of orthogonal
functions $F\in L^{2}({\mathbb{R}}^{2d})$ such that 
\begin{equation}
\left\Vert D_{rS}\mathcal{P}_{g}D_{rS}F-F\right\Vert ^{2}\leq \epsilon \text{%
.}  \label{almostbandlimitedTF}
\end{equation}%
Then, as $r\rightarrow \infty $, the following inequalities hold:%
\begin{equation*}
\left\vert S\right\vert (1+\epsilon )\leq \lim_{r\rightarrow \infty }\frac{%
\eta (\epsilon ,rS)}{r^{2d}}\leq \frac{\left\vert S\right\vert }{1-2\epsilon 
}\text{.}
\end{equation*}
\end{theorem}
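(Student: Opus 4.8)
The plan is to follow the blueprint of Theorem 1 essentially verbatim, since the author has set up Theorem B as a perfect structural analogue of Theorem A: both give the same trace identity, the same $\sum \lambda_k^2 = (\text{trace}) - o(r^d)$ relation, and the same asymptotic count $M_r(\gamma) = r^d|S| + o(r^d)$ for eigenvalues above a threshold. The only dictionary changes are that the operator $P_{rT,\Omega}$ is replaced by $D_{rS}\mathcal{P}_g D_{rS}$, the ambient space is $L^2(\mathbb{R}^{2d})$ rather than $L^2(\mathbb{R}^d)$, the normalization constant $r^d(2\pi)^{-d}|T||\Omega|$ becomes $r^d|S|$, and the scaling exponent in the final limit is $r^{2d}$ instead of $r^d$ (because the time-frequency plane has dimension $2d$, while the eigenvalue asymptotics still carry $r^d$, giving $\lim \eta/r^{2d} = |S| \cdot r^d/r^{2d}$-type bookkeeping that I must reconcile carefully).

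For the lower bound I would first fix $\varepsilon > 0$ and pick $\sigma$ with $\sigma^2 \le \varepsilon$, letting $\mathcal{F} = \{\phi_k\}$ be the normalized eigenfunctions of $D_{rS}\mathcal{P}_g D_{rS}$ with eigenvalues $\lambda_k > 1-\sigma$. Writing any $F = \sum a_k \phi_k + h$ with $h \in \mathrm{Ker}(D_{rS}\mathcal{P}_g D_{rS})$, the identical computation yields $\|D_{rS}\mathcal{P}_g D_{rS}F - F\|^2 \le \sigma^2\|F\|^2 + (1-\sigma^2)\|h\|^2$. I would then define $\gamma = (\varepsilon - \sigma^2)/(1-\sigma^2)$, choose $n$ with $n \le 1/\gamma \le n+1$, and partition $\mathcal{F}$ into blocks $\mathcal{F}_i$ of $n$ functions each plus an $o(r^d)$ residual. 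Crucially, the kernel of $D_{rS}\mathcal{P}_g D_{rS}$ is again infinite-dimensional (it contains functions supported outside $rS$), so I can select the orthonormal $h_i$ exactly as before. The same linear-algebra gadget producing the $\psi_j^{(i)}$ satisfying the Gram relations in $(\ref{ortfunct})$, followed by $\Phi_j^{(i)} = \psi_j^{(i)} + \sqrt{1/(n+1)}\,h_i$, gives $n+1$ orthonormal functions per block each satisfying $(\ref{almostbandlimitedTF})$. Counting via $\#\mathcal{F} = r^d|S| + o(r^d)$ and letting $\sigma \to 0$ delivers the lower estimate $(1+\epsilon)r^d|S| + o(r^d)$.

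For the upper bound I would reproduce the trace argument: with $F = \sum a_k\phi_k + h$, the identity $\|\mathcal{P}_g D_{rS}F - D_{rS}\mathcal{P}_g D_{rS}F\|^2 = \sum|a_k|^2\lambda_k(1-\lambda_k)$ holds because $D_{rS}$ is a projection (here one uses $\|\mathcal{P}_g D_{rS}F\|^2 = \langle D_{rS}\mathcal{P}_g D_{rS}F, F\rangle$, the exact analogue of the step with $B_\Omega$). Defining $\mathcal{F}(\delta)$ and the bad subspace $\mathcal{E}(\delta)$ as before, the subspace-intersection lemma (the $GEG$-trace argument) shows that at most $\delta^{-1}\dim\mathcal{E}(\delta) = o(r^d)$ of the $\eta$ orthonormal functions fail to lie in $\mathcal{F}(\delta)$. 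For the remaining $\eta - o(r^d)$ functions, the triangle inequality gives $\|\mathcal{P}_g D_{rS}F\|^2 \ge 1-2\varepsilon$, and since $\|\mathcal{P}_g D_{rS}F\|^2 = \langle D_{rS}\mathcal{P}_g D_{rS}F, F\rangle$ their sum over an orthonormal set is bounded by $\mathrm{tr}(D_{rS}\mathcal{P}_g D_{rS}) = r^d|S|$, yielding $(1-2\varepsilon)(\eta - o(r^d)) \le r^d|S|$ and hence the upper inequality.

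I do not expect a genuine obstacle, which is why the author declares these proofs "omitted." The one point demanding care is the operator identity $\|\mathcal{P}_g D_{rS}F\|^2 = \langle D_{rS}\mathcal{P}_g D_{rS}F, F\rangle$ that replaces the $B_\Omega D_{rT}$ computation: in the original proof $B_\Omega$ and $D_{rT}$ are both projections, whereas now one factor is $\mathcal{P}_g$ and the projection role is played by $D_{rS}$. I would verify that $\langle D_{rS}\mathcal{P}_g D_{rS}F, F\rangle = \langle \mathcal{P}_g D_{rS}F, D_{rS}F\rangle = \langle \mathcal{P}_g(\mathcal{P}_g D_{rS}F), D_{rS}F\rangle = \|\mathcal{P}_g D_{rS}F\|^2$, using that $\mathcal{P}_g$ is an orthogonal projection and $D_{rS}F = F$ for $F \in \mathcal{G}_g \cap \mathcal{D}(rS)$—or more precisely that the relevant inner products collapse because $\mathcal{P}_g^2 = \mathcal{P}_g = \mathcal{P}_g^*$. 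Once this identity is confirmed the entire argument transfers mechanically, and the factor $r^{2d}$ in the normalization simply reflects dividing the $r^d|S|$-scaled count by the volume scaling of the $2d$-dimensional configuration space.
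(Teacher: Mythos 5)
Your overall strategy coincides with the paper's: the authors' entire proof of this theorem is the sentence ``mimic the proof of Theorem 1, replacing $D_{rT}B_{\Omega }D_{rT}$ by $D_{rS}\mathcal{P}_{g}D_{rS}$, $B_{\Omega }D_{rT}$ by $\mathcal{P}_{g}D_{rS}$, and Theorem A by Theorem B,'' and your verification of the one structurally new step, namely $\left\Vert \mathcal{P}_{g}D_{rS}F\right\Vert ^{2}=\left\langle D_{rS}\mathcal{P}_{g}D_{rS}F,F\right\rangle $ via $\mathcal{P}_{g}^{2}=\mathcal{P}_{g}=\mathcal{P}_{g}^{\ast }$ and the self-adjointness of $D_{rS}$, is exactly right. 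However, there is a genuine gap in how you close the argument: the $r^{d}$ versus $r^{2d}$ tension that you correctly flag in your first paragraph is never resolved, and your final sentence (``the factor $r^{2d}$ in the normalization simply reflects dividing the $r^{d}|S|$-scaled count by the volume scaling of the $2d$-dimensional configuration space'') is not a valid resolution. If you carry the counts of Theorem B literally as $r^{d}\left\vert S\right\vert +o(r^{d})$, your lower-bound construction produces only $\eta (\epsilon ,rS)\geq (1+\gamma )r^{d}\left\vert S\right\vert +o(r^{d})$, and after dividing by $r^{2d}$ this gives $\lim_{r\rightarrow \infty }\eta (\epsilon ,rS)/r^{2d}\geq 0$, which is vacuous and does not establish the claimed bound $\left\vert S\right\vert (1+\epsilon )$; likewise the upper-bound trace argument yields only a trivial statement. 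As written, your proof therefore proves nothing about the stated limits.

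The correct resolution is to recognize that Theorem B as printed carries a typo inherited into your write-up: since $S\subset {\mathbb{R}}^{2d}$, the dilated set satisfies $\left\vert rS\right\vert =r^{2d}\left\vert S\right\vert $, and for a normalized window ($\left\Vert g\right\Vert _{2}=1$) one has $K_{g}(z,z)=\left\Vert g\right\Vert ^{2}=1$, so $\mathrm{tr}\left( D_{rS}\mathcal{P}_{g}D_{rS}\right) =\int_{rS}K_{g}(z,z)\,dz=r^{2d}\left\vert S\right\vert $; correspondingly, the Ramanathan--Topiwala eigenvalue count reads $M_{r}(\gamma )=r^{2d}\left\vert S\right\vert +o(r^{2d})$. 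With these corrected asymptotics, every occurrence of $r^{d}$ in your transferred argument must become $r^{2d}$: the residual block has $o(r^{2d})$ elements, $\dim \mathcal{E}(\delta )=o(r^{2d})$, the lower bound becomes $(1+\gamma )r^{2d}\left\vert S\right\vert +o(r^{2d})$, and the trace inequality becomes $(1-2\varepsilon )\left( \eta (\epsilon ,rS)-o(r^{2d})\right) \leq r^{2d}\left\vert S\right\vert $. Only then does division by $r^{2d}$ produce the stated inequalities $\left\vert S\right\vert (1+\epsilon )\leq \lim_{r\rightarrow \infty }\eta (\epsilon ,rS)/r^{2d}\leq \left\vert S\right\vert (1-2\epsilon )^{-1}$. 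Everything else in your proposal (the kernel being infinite dimensional because it contains all functions supported off $rS$, the Gram-matrix construction of the $\psi _{j}^{(i)}$, the $GEG$-trace exclusion lemma) transfers exactly as you say.
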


\begin{proof}
The proof mimics the proof of Theorem 1, replacing $D_{rT}B_{\Omega }D_{rT}$
by $D_{rS}\mathcal{P}_{g}D_{rS}$, $B_{\Omega }D_{rT}$ by $\mathcal{P}%
_{g}D_{rS}$ and Theorem A by Theorem B.
\end{proof}

\textbf{Acknowledgement. }The authors thank Jos\'{e} Luis Romero for his
constructive criticism of earlier versions of the manuscript, leading to a
better formulation of the results. We also want to thank both reviewers for
the very careful reading of the manuscript, leading to several corrections,
improvements on the readability and also valuable insights into the
mathematical content.

\end{document}